\newtheorem{Theorem}{Theorem}[section]
\theoremstyle{definition}
\newtheorem{Lemma}[Theorem]{Lemma}
\theoremstyle{definition}
\begin{document}
\baselineskip 17pt

\title{ On an open problem of Skiba\thanks{Research was supported by the NNSF  of China (11371335) and Wu Wen-Tsun Key Laboratory of Mathematics of Chinese Academy of Sciences.}}

\author{Zhenfeng Wu, Chi Zhang, Wenbin Guo\thanks{Corresponding author}\\
{\small  Department of Mathematics, University of Science and Technology of China}\\
{\small Hefei, 230026, P.R. China}\\
{\small E-mail: zhfwu@mail.ustc.edu.cn, zcqxj32@mail.ustc.edu.cn, wbguo@ustc.edu.cn}\\ \\}

\date{}
\maketitle

\begin{abstract}
Let $\sigma=\{\sigma_{i}|i\in I\}$ be some partition of the set $\mathbb{P}$ of all primes,
that is, $\mathbb{P}=\bigcup_{i\in I}\sigma_{i}$ and $\sigma_{i}\cap \sigma_{j}=\emptyset$ for all $i\neq j$.
Let $G$ be a finite group.
A set $\mathcal {H}$ of subgroups of $G$ is said to be a complete Hall $\sigma$-set of $G$ if every non-identity member of $\mathcal {H}$ is a Hall $\sigma_{i}$-subgroup of $G$ and $\mathcal {H}$ contains exactly one Hall $\sigma_{i}$-subgroup of $G$ for every $\sigma_{i}\in \sigma(G)$.
$G$ is said to be a $\sigma$-group if it possesses a complete Hall $\sigma$-set.
A $\sigma$-group $G$ is said to be $\sigma$-dispersive provided $G$ has a normal series $1 = G_1<G_2<\cdots< G_t< G_{t+1} = G$ and a complete Hall $\sigma$-set $\{H_{1}, H_{2}, \cdots, H_{t}\}$ such that $G_iH_i = G_{i+1}$ for all $i= 1,2,\ldots t$.
In this paper, we give a characterizations of $\sigma$-dispersive group, which give a positive answer to an open problem of Skiba in the paper \cite{AN3}.
\end{abstract}

\let\thefootnoteorig\thefootnote
\renewcommand{\thefootnote}{\empty}

\footnotetext{Keywords: Finite group; $\sigma$-subnormal; $n$-maximal subgroup; $\sigma$-dispersive}

\footnotetext{Mathematics Subject Classification (2010): 20D10, 20D15, 20D20} \let\thefootnote\thefootnoteorig

\section{Introduction}
Throughout this paper,
all groups are finite and $G$ always denotes a finite group.
Moreover, $n$ is an integer, $\mathbb{P}$ is the set of all primes.
The symbol $\pi(n)$ denotes the set of all primes dividing $n$ and $\pi(G)=\pi(|G|)$,
the set of all primes dividing the order of $G$.

In what follows, $\sigma=\{\sigma_{i}|i\in I\}$ is some partition of $\mathbb{P}$,
that is, $\mathbb{P}=\bigcup_{i\in I}\sigma_{i}$ and $\sigma_{i}\cap \sigma_{j}=\emptyset$ for all $i\neq j$.
$\Pi$ is always supposed to be a non-empty subset of the set $\sigma$ and $\Pi^{'}=\sigma\backslash \Pi$.
We write $\sigma(n)=\{\sigma_{i}|\sigma_{i}\cap \pi(n)\neq\emptyset\}$
and $\sigma(G)=\sigma(|G|)$.

Following {\cite{GA3,AN3,A}},
$G$ is said to be $\sigma$-primary if $G=1$ or $|\sigma(G)|=1$;
$n$ is a $\Pi$-number if $\pi(n)\subseteq\bigcup_{\sigma_{i}\in \Pi}\sigma_{i}$;
a subgroup $H$ of $G$ is called a $\Pi$-subgroup of $G$ if $|H|$ is a $\Pi$-number;
a subgroup $H$ of $G$ is called a Hall $\Pi$-subgroup of $G$ if $H$ is a $\Pi$-subgroup of $G$ and $|G:H|$ is a $\Pi^{'}$-number.
A subgroup $H$ is said to be a $\sigma$-Hall subgroup of $G$ if $H$ is a Hall $\Pi$-subgroup of $G$ for some subset $\Pi$ of the set $\sigma$.
A set $\mathcal {H}$ of subgroups of $G$ is said to be a complete Hall $\sigma$-set of $G$ if every non-identity member of $\mathcal {H}$ is a Hall $\sigma_{i}$-subgroup of $G$ for some $\sigma_{i}\in \sigma(G)$ and $\mathcal {H}$ contains exactly one Hall $\sigma_{i}$-subgroup of $G$ for every $\sigma_{i}\in \sigma(G)$.

If $G$ has a complete Hall $\sigma$-set $\mathcal {H}=\{H_{1}, \cdots, H_{t}\}$ such that $H_{i}H_{j}=H_{j}H_{i}$
for all $i,j$,
then $\{H_{1}, \cdots, H_{t}\}$ is said to be a $\sigma$-basis of $G$.
$G$ is said to be a {\sl $\sigma$-group} if $G$ possesses a complete {\sl Hall $\sigma$-set};
$G$ is called $\sigma$-soluble if every chief factor of $G$ is $\sigma$-primary;
$G$ is called $\sigma$-nilpotent if every Hall $\sigma_{i}$-subgroup of $G$ is normal.
As usual,
we use $\mathfrak{S}_{\sigma}$ and $\mathfrak{N}_{\sigma}$ to denote the class of all $\sigma$-soluble groups and the class of all $\sigma$-nilpotent groups, respectively.

\vskip 0.25cm
{\bf Definition 1.1.} \cite{AN3}
A $\sigma$-group $G$ is said to be $\sigma$-dispersive if $G$ has a normal series
\begin{center}
$1=G_{1}<G_{2}<\cdots<G_{t}<G_{t+1}=G$
\end{center}
and a complete Hall $\sigma$-set $\mathcal {H}=\{H_{1}, \cdots, H_{t}\}$ such that $G_{i}H_{i}=G_{i+1}$ for all $i=1,2,\cdots,t$.

It is clear that when $|\sigma(G)|=|\pi(G)|$,
then a $\sigma$-dispersive group $G$ is just a $\varphi$-dispersive group for some linear ordering $\varphi$ of primes (see \cite[p. 6]{Guo3}).

Recall that if there is a subgroup chain $M_{n}<M_{n-1}<\cdots<M_{1}<M_{0}=G$
such that $M_{i}$ is a maximal subgroup of $M_{i-1}$,
$i=1,2,\cdots,n$,
then the chain is said to be a maximal chain of $G$ of length $n$ and $M_{n}$ is said to be an $n$-maximal subgroup of $G$.

\vskip 0.25cm
{\bf Definition 1.2.} \cite{AN3}
A subgroup $A$ of $G$ is called
$\sigma$-subnormal in $G$ if there is a subgroup chain
\begin{center}
$A=A_0\leq A_1\leq\cdots \leq A_t=G$
\end{center}
such that either $A_{i-1}$ is normal in $A_{i}$
or $A_{i}/(A_{i-1})_{A_{i}}$ is $\sigma$-primary for all $i=1,2,\cdots,t$.

If each $n$-maximal subgroup of $G$ is $\sigma$-subnormal in $G$ but,
in the case $n>1$,
some $(n-1)$-maximal subgroup is not $\sigma$-subnormal in $G$,
then we write $m_{\sigma}(G)=n$ (see \cite{max}).
If $G$ is a soluble group,
the rank $r(G)$ of $G$ is the maximal integer $k$ such that $G$ has a $G$-chief factor of order $p^k$ for some prime $p$ (see \cite[p. 685]{HU}).

The relations between $n$-maximal subgroups (for $n>1$) of $G$ and the structure of $G$ was studied by many authors (see, for example, {\cite{M,LiS,LS,KM,KS,KY}} and Chapter 4 in the book \cite{Guo3}).
One of the earliest results in this direction were obtained by Huppert \cite{H},
who proved that if every 2-maximal subgroup of $G$ is normal,
then $G$ is supersoluble;
if every 3-maximal subgroup of $G$ is normal in $G$,
then $G$ is a soluble group of rank(G) at most two.
The first of these two results was generalized by Agrawal \cite{Ag}.
In fact,
Agrawal proved that if every 2-maximal subgroup of $G$ is $S$-quasinormal in $G$,
then $G$ is supersoluble.
Mann \cite{M} proved that if all $n$-maximal subgroups of a soluble group $G$ are subnormal and $|\pi(G)|\geq n+1$,
then $G$ is nilpotent;
but if $|\pi(G)|\geq n-1$,
then $G$ is $\varphi$-dispersive for some ordering $\varphi$ of the set of all primes.
In \cite{AN3}, Skiba studied the structure of a $\sigma$-soluble group $G$ by using the $\sigma$-subnormality of some $\sigma$-subnormal subgroup of $G$.
It is natural to ask: what is the structure of a $\sigma$-soluble group $G$ if $|\sigma(G)|=n$ and every $(n+1)$-maximal subgroups are $\sigma$-subnormal?
In particular,
Skiba posed the following open problem:

\vskip 0.25cm
{\bf Problem} \cite[Question 4.8]{AN3}. Let $G$ be a $\sigma$-soluble group and $|\sigma(G)|=n$.
Assume that every $(n+1)$-maximal subgroup of $G$ is $\sigma$-subnormal.
Is it true then that $G$ is $\sigma$-dispersive?

\vskip 0.25cm
In this paper, we give a positive answer to the above problem.
In fact, we obtain the following theorem:

\vskip 0.25cm
{\bf Theorem 1.3.}\label{T1} Let $G$ be a $\sigma$-soluble group and $|\sigma(G)|=n$.
Assume that every $(n+1)$-maximal subgroup of $G$ is $\sigma$-subnormal.
Then $G$ is $\sigma$-dispersive.

\vskip 0.25cm
{\bf Corollary 1.4.}\label{C1} Let $G$ be a $\sigma$-soluble group and $|\sigma(G)|\geq n$.
Assume that every $(n+1)$-maximal subgroup of $G$ is $\sigma$-subnormal.
Then $G$ is $\sigma$-dispersive.

\vskip 0.25cm
Note that in the case when $\sigma$ is the smallest partition of $\mathbb{P}$,
that is, $\sigma=\{{2},{3},\cdots\}$,
we get from Corollary 1.4 the following known result.

\vskip 0.25cm
{\bf Corollary 1.5.}(See Mann \cite{M}) Let each $n$-maximal subgroup of a soluble group $G$ be subnormal.
If $|\pi(G)|\geq n-1$,
then $G$ has a Sylow tower.

\vskip 0.25cm
All unexplained terminologies and notations are standard,
as in \cite{Guo3}, \cite{BB} and \cite{Doerk}.

\section{Preliminaries}

\begin{Lemma}\label{subnormal}\textup(See {\cite[Lemma 2.6]{AN3}})
Let $A,K$ and $N$ be subgroups of $G$. Suppose that $A$ is $\sigma$-subnormal in $G$ and $N$ is normal in $G$.
Then:

$(1)$ $A\cap K$ is $\sigma$-subnormal in $K$.

$(2)$ If $K$ is a $\sigma$-subnormal subgroup of $A$,
then $K$ is $\sigma$-subnormal in $G$.

$(3)$ If $K$ is $\sigma$-subnormal in $G$,
then $A\cap K$ and $\langle A, K\rangle$ are $\sigma$-subnormal in $G$.

$(4)$ $AN/N$ is $\sigma$-subnormal in $G/N$.

$(5)$ If $N\leq K$ and $K/N$ is $\sigma$-subnormal in $G/N$,
then $K$ is $\sigma$-subnormal in $G$.

$(6)$ If $H\neq1$ is a Hall $\Pi$-subgroup of $G$ and $A$ is not a $\Pi^{'}$-group,
then $A\cap H\neq1$ is a Hall $\Pi$-subgroup of $A$.

$(7)$ If $A$ is a $\sigma$-Hall subgroup of $G$,
then $A$ is normal in $G$.

\end{Lemma}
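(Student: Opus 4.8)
The plan is to treat the seven assertions in order of increasing difficulty, reusing the earlier parts in the later ones. Throughout I would write the defining $\sigma$-subnormal chain of $A$ as $A=A_0\leq A_1\leq\cdots\leq A_t=G$, where for each $i$ either $A_{i-1}\trianglelefteq A_i$ or $A_i/(A_{i-1})_{A_i}$ is $\sigma$-primary, and I would induct on the chain length $t$ (equivalently on $|G|$). Two standing facts would be used repeatedly: the class of $\sigma$-primary groups is closed under taking subgroups and quotients; and cores behave well, namely $(A_{i-1})_{A_i}\cap K\leq (A_{i-1}\cap K)_{A_i\cap K}$, and $(A_{i-1}/N)_{A_i/N}=(A_{i-1})_{A_i}/N$ whenever $N\trianglelefteq A_i$ with $N\leq A_{i-1}$.

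For the routine parts $(1),(2),(4),(5)$: in $(1)$ I would intersect the whole chain with $K$ to obtain $A\cap K=A_0\cap K\leq\cdots\leq A_t\cap K=K$; a normal step remains normal after intersecting, while for a $\sigma$-primary step the group $(A_i\cap K)/((A_{i-1})_{A_i}\cap K)$ embeds into the $\sigma$-primary group $A_i/(A_{i-1})_{A_i}$ and is thus $\sigma$-primary, and passing to the larger core $(A_{i-1}\cap K)_{A_i\cap K}$ only takes a further quotient. Part $(2)$ is pure concatenation of the chain from $K$ to $A$ with that from $A$ to $G$. Parts $(4)$ and $(5)$ are the image and the preimage of the chain under $G\to G/N$: a normal step maps and lifts to a normal step by the correspondence theorem, and the $\sigma$-primary steps are governed by the two core identities above together with the observation that $A_iN/(A_{i-1})_{A_i}N$ is an epimorphic image of $A_i/(A_{i-1})_{A_i}$. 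I do not expect any real obstacle here.

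Part $(3)$ is where the first genuine work appears. The intersection is quick: by $(1)$, $A\cap K$ is $\sigma$-subnormal in $K$, and since $K$ is $\sigma$-subnormal in $G$, part $(2)$ promotes it to $\sigma$-subnormality in $G$. The join $\langle A,K\rangle$ is the main obstacle. My plan is a Wielandt-style induction on $|G|$ over a minimal counterexample: by $(1)$ both $A$ and $K$ are $\sigma$-subnormal in every subgroup containing them, which lets me descend when the join is proper. The true difficulty is that $\sigma$-subnormality mixes honest normal steps with $\sigma$-primary-core steps, so the classical argument must be pushed through both kinds of step, typically by choosing a minimal normal subgroup $N$ of $G$, applying $(4)$ to deduce that $\langle A,K\rangle N/N$ is $\sigma$-subnormal in $G/N$ by induction, pulling back via $(5)$, and separately handling the case in which the top link of one chain is a $\sigma$-primary step. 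This is the step for which I would reserve the most effort.

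Parts $(6),(7)$ combine $\sigma$-subnormality with Hall theory, and here I would induct on the chain length by inspecting the top link $A_{t-1}\leq G$. If $A_{t-1}\trianglelefteq G$, the standard facts on Hall $\Pi$-subgroups give that $H\cap A_{t-1}$ is a Hall $\Pi$-subgroup of $A_{t-1}$, nontrivial because $A\leq A_{t-1}$ is not a $\Pi'$-group, and induction inside $A_{t-1}$ settles $(6)$; for $(7)$ a normal Hall subgroup is characteristic (it contains every $\Pi$-subgroup, hence is unique of its order), so $A\trianglelefteq A_{t-1}\trianglelefteq G$ with $A$ characteristic in $A_{t-1}$ forces $A\trianglelefteq G$. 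If instead $G/(A_{t-1})_G$ is $\sigma$-primary, say a $\sigma_j$-group with core $C=(A_{t-1})_G$, I would split on whether $\sigma_j\in\Pi$: when $\sigma_j\notin\Pi$, $G/C$ is a $\Pi'$-group, forcing $H\leq C\leq A_{t-1}$ so that $H$ is already a Hall $\Pi$-subgroup of $A_{t-1}$ and induction applies; when $\sigma_j\in\Pi$, one has $HC=G$, and I would compare $\Pi$- and $\Pi'$-parts across $|G|=|C|\,|G/C|$ to see that $H\cap A_{t-1}$ (respectively $H$ itself, in $(7)$) has the correct Hall order, and then descend. The bookkeeping of $\Pi$- versus $\Pi'$-parts in the subcase $\sigma_j\in\Pi$, and, in $(7)$, making sure it is normality rather than mere $\sigma$-subnormality that is transmitted upward from $A_{t-1}$, are the points demanding the most care after the join in $(3)$.
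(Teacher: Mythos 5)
This lemma is not proved in the paper at all: it is quoted from Skiba \cite{AN3} (his Lemma 2.6), so there is no internal proof to compare your attempt against, and your proposal has to be judged on its own merits. Most of it holds up. Parts (1), (2), (4), (5), the intersection half of (3), and parts (6), (7) are essentially correct as you describe them: the two core facts $(A_{i-1})_{A_i}\cap K\leq (A_{i-1}\cap K)_{A_i\cap K}$ and $(A_{i-1}/N)_{A_i/N}=(A_{i-1})_{A_i}/N$, together with closure of $\sigma$-primary groups under subgroups and quotients, do make the intersection, image and preimage arguments go through, and your induction on the top link works for (6) and (7). (One simplification in (7): the subcase $\sigma_j\in\Pi$ is vacuous, since $|G:A(A_{t-1})_G|$ would then be simultaneously a $\Pi$-number and a $\Pi^{'}$-number, forcing $G=A(A_{t-1})_G\leq A_{t-1}$, against $A_{t-1}<G$; no order bookkeeping is needed.)

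The genuine gap is the join in (3). Your plan --- minimal counterexample, choose a minimal normal subgroup $N$, deduce from (4) and induction that $\langle A,K\rangle N/N$ is $\sigma$-subnormal in $G/N$, pull back by (5) --- only yields that $\langle A,K\rangle N$ is $\sigma$-subnormal in $G$. To descend to $\langle A,K\rangle$ itself you need $\langle A,K\rangle N<G$, so that (1) and the inductive hypothesis apply inside $\langle A,K\rangle N$; the case $\langle A,K\rangle N=G$ (and, likewise, $\langle A,K\rangle M=G$ for every proper normal or core-$\sigma$-primary overgroup $M$ arising from the top links of the chains of $A$ and $K$) is left untouched, and that is exactly the hard case. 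This is not a removable technicality: by Lemma \ref{subn}, when $\sigma$ is the finest partition of $\mathbb{P}$, $\sigma$-subnormality coincides with subnormality, so the join statement in (3) contains Wielandt's join theorem as a special case, and that theorem is known not to follow from naive quotient-and-pullback induction --- every proof uses something of the strength of Wielandt's zipper lemma or his subnormality criterion. A complete argument must therefore either import such Wielandt-type machinery and push it across the $\sigma$-primary steps, or reproduce a zipper-lemma argument in the $\sigma$-setting; your closing remark about ``separately handling the case in which the top link of one chain is a $\sigma$-primary step'' does not supply this missing ingredient, because the obstruction arises even when all top links are honest normal steps.
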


\begin{Lemma}\label{subn}
If $|\pi(G)|=|\sigma(G)|$ and $H$ is a $\sigma$-subnormal subgroup of $G$,
then $H$ is subnormal in $G$.
\end{Lemma}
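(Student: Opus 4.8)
The plan is to show that every link $A_{i-1}\leq A_i$ in a chain witnessing the $\sigma$-subnormality of $H$ is already a subnormal link, and then to conclude by transitivity of subnormality.

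First I would exploit the hypothesis $|\pi(G)|=|\sigma(G)|$ to translate ``$\sigma$-primary'' into ``$p$-group''. Since the members of $\sigma$ partition $\mathbb{P}$, the number $|\sigma(G)|$ of parts meeting $\pi(G)$ equals $|\pi(G)|$ exactly when each such part $\sigma_{i}$ meets $\pi(G)$ in a single prime. Hence every $\sigma_{i}$ contains at most one prime divisor of $|G|$. Now for any section $K/L$ of $G$ we have $\pi(K/L)\subseteq\pi(G)$, so if $K/L$ is $\sigma$-primary then all of its prime divisors lie in one part $\sigma_{i}$, and by the preceding remark there is at most one such prime; thus $K/L$ is a $p$-group for some prime $p$ (or is trivial).

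Next I would take a chain $H=A_0\leq A_1\leq\cdots\leq A_t=G$ as in Definition 1.2 and handle each step separately. If $A_{i-1}$ is normal in $A_{i}$, the step is subnormal with nothing to prove. Otherwise $A_{i}/(A_{i-1})_{A_{i}}$ is $\sigma$-primary, hence a $p$-group by the previous paragraph. Writing $N=(A_{i-1})_{A_{i}}$, the group $A_{i}/N$ is a $p$-group and therefore nilpotent, so every subgroup of it is subnormal; in particular $A_{i-1}/N$ is subnormal in $A_{i}/N$. Since $N$ is normal in $A_{i}$ and contained in $A_{i-1}$, pulling the subnormal series back through the quotient $A_{i}/N$ shows that $A_{i-1}$ is subnormal in $A_{i}$.

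Finally I would splice these subnormal links together and use the transitivity of subnormality to deduce that $H=A_0$ is subnormal in $A_t=G$. I do not expect a substantial obstacle here: the only point demanding a little care is the middle step, namely passing to the quotient by the core $N=(A_{i-1})_{A_{i}}$ and invoking that every subgroup of a finite nilpotent (here $p$-) group is subnormal. The real content of the argument is the opening observation converting the $\sigma$-primary condition into a $p$-group condition, after which everything reduces to standard facts about subnormal subgroups.
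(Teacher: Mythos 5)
Your proposal is correct and follows essentially the same route as the paper's own proof: use $|\pi(G)|=|\sigma(G)|$ to see that each $\sigma_i$ meets $\pi(G)$ in a single prime, so every $\sigma$-primary quotient $A_i/(A_{i-1})_{A_i}$ is a $p$-group, whence each non-normal link is subnormal (subgroups of nilpotent groups being subnormal), and transitivity finishes. The only difference is that you spell out the justifications (the section-of-$G$ observation and the pull-back through the core quotient) that the paper leaves implicit.
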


\begin{proof}

By hypothesis,
there exists a subgroup chain $H=H_{0}\leq H_{1}\leq\cdots \leq H_{t-1}\leq H_{t}=G$
such that either $H_{i-1}$ is normal in $H_{i}$
or $H_{i}/(H_{i-1})_{H_{i}}$ is $\sigma$-primary for all $i=1,\cdots, t$.
We show that $H_{i-1}\vartriangleleft\vartriangleleft H_{i}$ for all $i=1,\cdots, t$.
Since $|\pi(G)|=|\sigma(G)|$,
we have that $\sigma_{i}\cap \pi(G)=\{p_{i}\}$ for some prime $p_{i}$ and every $\sigma_{i}$ such that $\sigma_{i}\cap \pi(G)\neq\emptyset$.
If $H_{i-1}$ is not normal in $H_{i}$,
then $H_{i}/(H_{i-1})_{H_{i}}$ is a $p$-group for some prime $p$ dividing $|G|$.
Hence $H_{i-1}/(H_{i-1})_{H_{i}}\vartriangleleft\vartriangleleft H_{i}/(H_{i-1})_{H_{i}}$.
Consequently $H_{i-1}\vartriangleleft \vartriangleleft H_{i}$ for all $i=1,\cdots, t$.
Thus $H$ is subnormal in $G$.
\end{proof}

\begin{Lemma}\label{maximal}\textup{(See \cite[Lemma 4.5]{max})}
The following statements hold:

$(1)$ If each $n$-maximal subgroup of $G$ is $\sigma$-subnormal and $n>1$,
then each $(n-1)$-maximal subgroup is $\sigma$-nilpotent.

$(2)$ If each $n$-maximal subgroup of $G$ is $\sigma$-subnormal,
then each $(n+1)$-maximal subgroup is $\sigma$-subnormal.
\end{Lemma}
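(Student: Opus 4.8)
The plan is to prove the two statements in order, deriving (2) from (1) together with a standard characterization of $\sigma$-nilpotency. Throughout I would use freely that $\mathfrak{N}_{\sigma}$ is a subgroup-closed saturated formation and that every subgroup of a $\sigma$-nilpotent group is $\sigma$-subnormal; both are part of the foundational $\sigma$-theory (see \cite{AN3,A}). The first task is to isolate the real content as an auxiliary claim: \emph{if every maximal subgroup of a group $X$ is $\sigma$-subnormal in $X$, then $X$ is $\sigma$-nilpotent.}

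To prove the claim I would argue by induction on $|X|$ and take a minimal counterexample. First, the hypothesis passes to every proper quotient $X/N$ with $1\neq N\vartriangleleft X$: a maximal subgroup of $X/N$ has the form $M/N$ with $M$ maximal in $X$, and $M/N=MN/N$ is $\sigma$-subnormal in $X/N$ by Lemma~\ref{subnormal}(4), so $X/N\in\mathfrak{N}_{\sigma}$ by minimality. If $X$ had two distinct minimal normal subgroups $N_{1},N_{2}$, then $N_{1}\cap N_{2}=1$ and $X$ would be a subdirect product of the $\sigma$-nilpotent groups $X/N_{1}$ and $X/N_{2}$, whence $X\in\mathfrak{N}_{\sigma}$ since $\mathfrak{N}_{\sigma}$ is a formation; thus $X$ has a unique minimal normal subgroup $N$. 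As $X/N\in\mathfrak{N}_{\sigma}$ and $\mathfrak{N}_{\sigma}$ is saturated, we must have $N\not\leq\Phi(X)$ (otherwise $X/\Phi(X)$, a quotient of $X/N$, would be $\sigma$-nilpotent, forcing $X\in\mathfrak{N}_{\sigma}$). Hence there is a maximal subgroup $M$ with $X=MN$; since every nontrivial normal subgroup contains $N$ while $N\not\leq M$, we get $M_{X}=1$. Because $M$ is maximal and $\sigma$-subnormal, either $M\vartriangleleft X$ (so $M=M_{X}=1$ and $X$ has prime order) or $X=X/M_{X}$ is $\sigma$-primary; in both cases $X\in\mathfrak{N}_{\sigma}$, a contradiction. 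This settles the claim, which I expect to be the main obstacle.

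For part (1), let $M$ be an $(n-1)$-maximal subgroup of $G$ with $n>1$. Every maximal subgroup $L$ of $M$ extends the length-$(n-1)$ chain from $M$ to a maximal chain of length $n$ in $G$, so $L$ is an $n$-maximal subgroup of $G$ and therefore $\sigma$-subnormal in $G$ by hypothesis; by Lemma~\ref{subnormal}(1), $L=L\cap M$ is then $\sigma$-subnormal in $M$. Thus every maximal subgroup of $M$ is $\sigma$-subnormal in $M$, and the auxiliary claim gives that $M$ is $\sigma$-nilpotent.

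For part (2), let $L$ be an $(n+1)$-maximal subgroup of $G$, realized by a chain $L=M_{n+1}<M_{n}<\cdots<M_{0}=G$. If $n=1$, then every maximal subgroup of $G$ is $\sigma$-subnormal, so $G$ itself is $\sigma$-nilpotent by the claim and every subgroup, in particular $L$, is $\sigma$-subnormal in $G$. If $n>1$, then $M_{n-1}$ is an $(n-1)$-maximal subgroup, hence $\sigma$-nilpotent by part (1), so its subgroup $M_{n}$ is $\sigma$-nilpotent as well; consequently $L\leq M_{n}$ is $\sigma$-subnormal in $M_{n}$. Since $M_{n}$ is an $n$-maximal subgroup of $G$, it is $\sigma$-subnormal in $G$ by hypothesis, and Lemma~\ref{subnormal}(2) then yields that $L$ is $\sigma$-subnormal in $G$. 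Once the saturated-formation machinery for $\mathfrak{N}_{\sigma}$ behind the claim is in place, the remainder is routine bookkeeping with maximal chains and the transfer properties recorded in Lemma~\ref{subnormal}.
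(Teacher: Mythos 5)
Your proof is correct, but note that there is nothing in this paper to compare it against: the lemma is imported verbatim from \cite[Lemma 4.5]{max} and no proof is given here, so the comparison can only be with that cited source. Your overall decomposition --- isolate the claim that a group all of whose maximal subgroups are $\sigma$-subnormal must be $\sigma$-nilpotent, deduce (1) from this claim together with Lemma \ref{subnormal}(1), and deduce (2) from (1) via subgroup-closure of $\mathfrak{N}_{\sigma}$ and the transitivity statement Lemma \ref{subnormal}(2) --- is exactly the natural route, and it is essentially how the result is obtained in \cite{max}, where the auxiliary claim is available as a known $\sigma$-nilpotency criterion rather than being reproved. What your write-up adds is a self-contained minimal-counterexample proof of that criterion (unique minimal normal subgroup, $N\not\leq\Phi(X)$, core-free maximal subgroup, then the dichotomy ``$M\vartriangleleft X$ or $X/M_X$ is $\sigma$-primary''), and that argument is sound. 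The one thing to make explicit in a final version is that your proof leans on three facts this paper never states: that $\mathfrak{N}_{\sigma}$ is a formation (needed for the two-minimal-normal-subgroups step), that it is saturated (needed for the $N\not\leq\Phi(X)$ step), and that every subgroup of a $\sigma$-nilpotent group is $\sigma$-subnormal (needed in both cases of your part (2)). All three are established in \cite{AN3} and \cite{A}, so the reliance is legitimate, but they should be cited with precise statement numbers rather than waved at, since they carry real content --- in particular the saturation of $\mathfrak{N}_{\sigma}$ is not formal and your argument genuinely uses it.
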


Let $A$ and $B$ be subgroups of $G$.
Following \cite{max},
we say that $A$ forms an irreducible pair with $B$ if $AB=BA$ and $A$ is a maximal subgroup of $AB$.

\begin{Lemma}\label{irre}\textup{(See \cite[Lemma 6.1]{max})}
Suppose that $G$ is $\sigma$-soluble and let $\{H_{1}, H_{2}, \cdots, H_{t}\}$ be a $\sigma$-basis of $G$.
If $H_{i}$ forms an irreducible pair with $H_{j}$,
then $H_{j}$ is an elementary abelian Sylow subgroup of $G$.
\end{Lemma}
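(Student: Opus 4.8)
The plan is to reduce everything to the subgroup $K := H_iH_j = H_jH_i$ and to argue by induction on $|K|$. First I would record the basic facts about $K$: since $H_i$ and $H_j$ have coprime orders (a $\sigma_i$- and a $\sigma_j$-group with $i\neq j$) we have $H_i\cap H_j=1$, so $|K|=|H_i||H_j|$, the index $|K:H_i|=|H_j|$ is a $\sigma_i'$-number and $|K:H_j|=|H_i|$ is a $\sigma_j'$-number; hence $H_i$ and $H_j$ are Hall $\sigma_i$- and Hall $\sigma_j$-subgroups of $K$, $\pi(K)\subseteq\sigma_i\cup\sigma_j$, and $K$ is $\sigma$-soluble as a subgroup of $G$, with $\{H_i,H_j\}$ (together with trivial Hall subgroups for the other classes) a $\sigma$-basis and $H_i$ forming an irreducible pair with $H_j$ in $K$. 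Since the conclusion ``$H_j$ is an elementary abelian Sylow subgroup of $G$'' is equivalent to ``$H_j$ is an elementary abelian $p$-group'' (a Hall $\sigma_j$-subgroup of $G$ that is a $p$-group is automatically a Sylow $p$-subgroup), it suffices to prove this inside $K$.

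Next I would take a minimal normal subgroup $N$ of $K$. By $\sigma$-solubility the chief factor $N$ is $\sigma$-primary, so $N$ is either a $\sigma_i$-group or a $\sigma_j$-group. If $N$ is a $\sigma_i$-group, then $N\leq H_i$ (a normal $\sigma_i$-subgroup is contained in every Hall $\sigma_i$-subgroup), so $H_i/N$ is a maximal subgroup of $K/N$ forming an irreducible pair with $H_jN/N\cong H_j$ (here $N\cap H_j\leq H_i\cap H_j=1$); as $|K/N|<|K|$, the induction hypothesis gives that $H_jN/N$, and hence $H_j$, is elementary abelian, and we are done. The substantive case is when $N$ is a $\sigma_j$-group: then $N\leq H_j$ while $N\cap H_i=1$, so $N\not\leq H_i$, and maximality of $H_i$ forces $K=NH_i$; comparing orders gives $|N|=|H_j|$, so $N=H_j$. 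Thus the problem collapses to showing that a minimal normal subgroup $N=H_j$ of $K$, complemented by the maximal subgroup $H_i$, is elementary abelian.

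The main obstacle is exactly this last step, because $\sigma$-solubility permits nonabelian ($\sigma$-primary) chief factors, so a priori $N$ could be a direct product of nonabelian simple groups; the maximality of $H_i$ must be used to exclude this. The key observation I would exploit is that, by the correspondence $U\mapsto UH_i$ (valid by Dedekind's modular law since $K=N\rtimes H_i$), subgroups of $K$ containing $H_i$ correspond to $H_i$-invariant subgroups of $N$; hence maximality of $H_i$ is equivalent to the statement that the only $H_i$-invariant subgroups of $N$ are $1$ and $N$. Now, since $(|H_i|,|N|)=1$, for each prime $q$ dividing $|N|$ the group $H_i$ normalizes some Sylow $q$-subgroup $Q$ of $N$ (the standard existence of invariant Sylow subgroups under coprime action). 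Then $N_N(Q)$ is $H_i$-invariant and contains $Q\neq1$, so it must equal $N$; that is, every Sylow subgroup of $N$ is normal in $N$.

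Finally I would conclude as follows: having all its Sylow subgroups normal makes $N$ nilpotent, hence the direct product of those Sylow subgroups, each of which is characteristic in $N$ and therefore normal in $K$; minimality of $N$ then forces $N$ to be a single $q$-group. A minimal normal $q$-subgroup is elementary abelian, since its centre and its subgroup generated by the $q$-th powers are characteristic in $N$, hence normal in $K$, which forces $N$ to be abelian of exponent $q$. Therefore $H_j=N$ is an elementary abelian $q$-group, and being a Hall $\sigma_j$-subgroup of $G$ it is an elementary abelian Sylow $q$-subgroup of $G$, completing the induction.
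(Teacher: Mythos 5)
Your proof is correct, but note that there is nothing in the paper to compare it against: the paper does not prove this lemma at all, it simply imports it as \cite[Lemma 6.1]{max}, so your argument is a genuine (and self-contained) replacement for an external citation. The skeleton is sound: the reduction to $K=H_iH_j$ with $H_i$, $H_j$ Hall $\sigma_i$- and $\sigma_j$-subgroups of $K$ of coprime order; the induction on $|K|$ via a minimal normal subgroup $N$, which is $\sigma_i$- or $\sigma_j$-primary by $\sigma$-solubility; the absorption $N\leq H_i$ and passage to $K/N$ in the first case; the identification $N=H_j$ forced by maximality and order comparison in the second; the Dedekind correspondence showing that maximality of $H_i$ in $K=N\rtimes H_i$ means $N$ has no proper nontrivial $H_i$-invariant subgroup; and the conclusion that an $H_i$-invariant Sylow $q$-subgroup $Q$ of $N$ makes $N_N(Q)$ an $H_i$-invariant subgroup containing $Q\neq 1$, whence $N$ is nilpotent, hence (by minimality) a $q$-group, hence elementary abelian. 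One point deserves flagging: your appeal to the ``standard existence of invariant Sylow subgroups under coprime action'' is Glauberman's lemma, which requires that $H_i$ or $N$ be soluble; since $\sigma$-solubility does not make either group soluble, you are implicitly invoking the Feit--Thompson theorem (one of the two coprime orders is odd). That is legitimate but heavy machinery. In this $\sigma$-soluble setting you can avoid it: by the Frattini argument $K=N\,N_K(Q)$ for $Q\in \mathrm{Syl}_q(N)$, so $|N_K(Q)|$ is divisible by $|H_i|$ and a Hall $\sigma_i$-subgroup $L$ of the $\sigma$-soluble group $N_K(Q)$ is then a Hall $\sigma_i$-subgroup of $K$; by the conjugacy of Hall $\sigma_i$-subgroups in $\sigma$-soluble groups (\cite[Theorem A]{AN2}, already cited in this paper), $L=H_i^{x}$ for some $x\in K$, and then $H_i$ normalizes the Sylow $q$-subgroup $Q^{x^{-1}}$ of $N$. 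With that substitution the proof uses only tools already present in the paper's reference list.
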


\begin{Lemma}\label{soluble}\textup{(See \cite[Lemma 2.1]{AN2})}
$(1)$ The class $\mathfrak{S}_{\sigma}$ is closed under taking direct products,
homomorphic images and subgroups.
Moreover, any extension of the $\sigma$-soluble group by a $\sigma$-soluble group is a $\sigma$-soluble group as well.

$(2)$ If $M$ is a maximal subgroup of a $\sigma$-soluble group $G$,
then $|G:M|$ is $\sigma$-primary.

$(3)$ If $G$ is a $\sigma$-soluble group,
then for any $i$ such that $\sigma_{i}\cap \pi(G)\neq\emptyset$,
$G$ has a maximal subgroup $M$ such that $|G:M|$ is a $\sigma_{i}$-number.
\end{Lemma}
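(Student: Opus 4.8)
The plan is to handle the three parts in turn, leaning throughout on the single observation that $\sigma$-primariness passes to sections: if $K$ is $\sigma$-primary and $U/L$ is any section of $K$, then $\pi(U/L)\subseteq\pi(U)\subseteq\pi(K)$, so $|\sigma(U/L)|\le|\sigma(K)|\le 1$. For part (1), I would first record the equivalent description that $G$ is $\sigma$-soluble if and only if $G$ has a normal series $1=G_0\trianglelefteq G_1\trianglelefteq\cdots\trianglelefteq G_r=G$ all of whose factors $G_j/G_{j-1}$ are $\sigma$-primary (pass between this and a chief series by refinement). Closure under subgroups then follows by intersecting the series with $H\le G$: the factors of $1=G_0\cap H\trianglelefteq\cdots\trianglelefteq G_r\cap H=H$ embed into the $\sigma$-primary factors $G_j/G_{j-1}$, hence are $\sigma$-primary by the section remark. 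Closure under homomorphic images follows by pushing the series forward along $G\to G/N$, whose factors are quotients of the $\sigma$-primary factors; for an extension $1\to N\to G\to G/N\to 1$ with $N$ and $G/N$ both $\sigma$-soluble, concatenating a $\sigma$-primary series of $N$ with the preimage of one for $G/N$ gives a $\sigma$-primary normal series of $G$, and direct products are the split special case.

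For part (2), I would reduce to the core-free situation by replacing $G$ with $G/M_G$ (still $\sigma$-soluble by (1)) and $M$ with $M/M_G$, so that $M_G=1$. Choosing a minimal normal subgroup $N$ of $G$, we get $N\not\le M$, hence $G=NM$ by maximality and $|G:M|=|N:N\cap M|$ divides $|N|$. Since $G$ is $\sigma$-soluble, the chief factor $N\cong N/1$ is $\sigma$-primary, so $\pi(N)\subseteq\sigma_i$ for a single index $i$; therefore $\pi(|G:M|)\subseteq\sigma_i$ and $|G:M|$ is $\sigma$-primary.

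For part (3), I would induct on $|G|$, fix $i$ with $\sigma_i\cap\pi(G)\neq\emptyset$, and pick a minimal normal subgroup $N$, which is $\sigma_j$-primary for some $j$. If $\sigma_i\cap\pi(G/N)\neq\emptyset$, the induction hypothesis applied to the $\sigma$-soluble group $G/N$ yields a maximal subgroup $M/N$ of $\sigma_i$-index, and the corresponding $M$ works since $|G:M|=|G/N:M/N|$. The remaining case is $\sigma_i\cap\pi(G/N)=\emptyset$, which forces $\sigma_i\cap\pi(G)\subseteq\pi(N)$, hence $j=i$ and $\pi(N)\subseteq\sigma_i$; combined with $\pi(G/N)\cap\sigma_i=\emptyset$ this makes $N$ a normal Hall $\sigma_i$-subgroup of $G$. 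Here the crux—and the step I expect to be the real obstacle—is showing $N\not\le\Phi(G)$, for then some maximal subgroup $M$ omits $N$, giving $G=NM$ and $|G:M|=|N:N\cap M|$ a $\sigma_i$-number. To rule out $N\le\Phi(G)$, I would invoke Schur--Zassenhaus: as $N$ is a normal Hall subgroup, $\gcd(|N|,|G:N|)=1$ and $N$ admits a complement $H$ with $G=NH$; if $N\le\Phi(G)$, then $G=\Phi(G)H=H$ by the non-generator property of the Frattini subgroup, forcing $N=1$, a contradiction. This closes the induction and completes the proof.
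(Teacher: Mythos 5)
First, a remark on the comparison itself: the paper does not prove this lemma at all --- it is imported verbatim from Skiba's paper \cite[Lemma 2.1]{AN2} --- so your proposal is being measured against a citation rather than a proof. On the substance, parts (2) and (3) of your argument are correct and complete. The core-free reduction plus a minimal normal subgroup is exactly the right mechanism for (2), and in (3) the two-case induction is sound; in particular the step you flagged as the crux --- ruling out $N\leq\Phi(G)$ for the normal Hall $\sigma_i$-subgroup $N$ via Schur--Zassenhaus and the non-generator property of $\Phi(G)$ --- is a correct and standard way to finish (the degenerate case $N=G$ causes no harm, since then any maximal subgroup of $G$ works).

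The genuine gap is in the extension step of part (1). A ``$\sigma$-primary series of $N$'' has terms normal in $N$ but not, in general, normal in $G$, so concatenating it with the preimage of a series for $G/N$ produces only a \emph{subnormal} series of $G$, not a normal series. Your stated equivalence (``$\sigma$-soluble iff there is a normal series of $G$ with $\sigma$-primary factors'') is proved by refining a normal series to a chief series, and that refinement argument is unavailable here: a subnormal series need not be refinable to a chief series, so the step fails as written. The standard repair is to prove the stronger characterization that $G$ is $\sigma$-soluble if and only if every \emph{composition factor} of $G$ is $\sigma$-primary. One direction is your section remark applied to a refinement of a chief series into a composition series. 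The converse needs one structural fact your proposal never invokes: every chief factor $H/K$ is a direct product of isomorphic simple groups $S$, and $S$ is a composition factor of $G$ (refine $1\unlhd K\unlhd H\unlhd G$ to a composition series), so $\pi(H/K)=\pi(S)$ and $\sigma$-primariness of $S$ gives $\sigma$-primariness of $H/K$. With this characterization all four closure statements of (1) follow at once: composition factors of subgroups and of quotients of $G$ are sections of composition factors of $G$ (intersect, respectively push forward, a composition series and refine), while the composition factors of an extension are exactly those of the kernel together with those of the quotient --- which is precisely the concatenation you wanted, now carried out at the level where it is legitimate.
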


Recall that a class of groups $\mathcal {F}$ is called a formation if it is closed under taking homomorphic images and subdirect products.
A formation $\mathcal {F}$ is called saturated if $G\in \mathcal {F}$ whenever $G/\Phi(G)\in \mathcal {F}$ (see, for example, \cite {Guo3}).
The $\mathcal {F}$-residual of $G$,
denoted by $G^{\mathcal {F}}$,
is the smallest normal subgroup of $G$ with quotient in $\mathcal {F}$.

\begin{Lemma}\label{formation}\textup{(See \cite[p. 35]{LA})}
For any ordering $\varphi$ of $\mathbb{P}$ the class of all $\varphi$-dispersive groups is a saturated formation.
\end{Lemma}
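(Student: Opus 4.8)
The plan is to realize the class $\mathcal{F}$ of $\varphi$-dispersive groups as a \emph{local} formation and then invoke the (standard) Gasch\"utz--Lubeseder--Schmid theorem, which guarantees that every local formation is a saturated formation. Fix the ordering $\varphi:p_{1},p_{2},\dots$ and use the recursive description: $G$ is $\varphi$-dispersive if and only if, for the least index $i$ with $p_{i}\mid|G|$, the Sylow $p_{i}$-subgroup $P$ is normal in $G$ and $G/P$ is again $\varphi$-dispersive; in particular such groups are soluble. For each prime $p=p_{i}$ set $\pi_{p}=\{p_{j}:j>i\}$ and let $f(p)$ be the formation of all $\pi_{p}$-groups. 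I would then prove the set equality $\mathcal{F}=LF(f)$, where $LF(f)$ consists of those $G$ with $G/C_{G}(H/K)\in f(p)$ for every chief factor $H/K$ of $G$ and every prime $p\mid|H/K|$.

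\noindent For $\mathcal{F}\subseteq LF(f)$, take a $\varphi$-dispersive $G$ and a $p$-chief factor $H/K$ with $p=p_{i}$, and let $U\le R$ be the normal Hall $\{p_{1},\dots,p_{i-1}\}$- and $\{p_{1},\dots,p_{i}\}$-subgroups supplied by the tower. Since the $p$-layer sits above $U$ one has $U\le K\le C_{G}(H/K)$, so no prime $p_{j}$ with $j<i$ divides $|G/C_{G}(H/K)|$. Moreover $R/U$ is a $p$-group, so $RC_{G}(H/K)/C_{G}(H/K)$ is a normal $p$-subgroup of $G/C_{G}(H/K)$; as the latter acts faithfully and irreducibly on $H/K$, its $O_{p}$ is trivial, forcing $R\le C_{G}(H/K)$ and hence $p\nmid|G/C_{G}(H/K)|$. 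Thus only primes $p_{j}$ with $j>i$ survive, i.e.\ $G/C_{G}(H/K)\in f(p)$.

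\noindent For $LF(f)\subseteq\mathcal{F}$ I would induct on $|G|$. First $G$ has no nonabelian chief factor: if $H/K\cong S^{k}$ with $S$ nonabelian simple and $t$ the largest-index prime dividing $|S|$, then $HC_{G}(H/K)/C_{G}(H/K)\cong H/K$ would force $t\in\pi_{t}$, which is impossible; hence $G$ is soluble. Let $p=p_{i}$ be the least-index prime dividing $|G|$. Since $i\le l$ for every prime $p_{l}$ occurring in a chief factor, $p\notin\pi_{p_{l}}$, so $p$ divides no $|G/C_{G}(H/K)|$ and a Sylow $p$-subgroup $P$ centralizes \emph{every} chief factor of $G$. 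By the classical identity $F(G)=\bigcap_{H/K}C_{G}(H/K)$ for soluble groups, this gives $P\le F(G)$, whence $P$ is the unique, and therefore characteristic, Sylow $p$-subgroup of the nilpotent normal subgroup $F(G)$ and is normal in $G$. As $LF(f)$ is quotient-closed, $G/P\in LF(f)$, and induction makes $G/P$ $\varphi$-dispersive; restoring $P$ shows $G\in\mathcal{F}$.

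\noindent The elementary closure checks (quotients push the tower down; a direct product of $\pi_{p}$-groups is a $\pi_{p}$-group) are painless, so the crux is the equality $\mathcal{F}=LF(f)$, and within it the determination of exactly which primes remain in $G/C_{G}(H/K)$. I expect the main obstacle to be choosing $f$ correctly and justifying the two inclusions---specifically the fact that a faithful irreducible action on a $p$-chief factor has trivial $O_{p}$ for ``$\supseteq$'', and the Fitting-subgroup identity $F(G)=\bigcap_{H/K}C_{G}(H/K)$ for ``$\subseteq$''---after which saturation comes for free from the general local-formation machinery.
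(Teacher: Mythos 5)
The paper does not actually prove this lemma---it is quoted from Shemetkov's book \cite[p.~35]{LA}---so there is no internal proof to compare against; your proposal supplies a genuine proof, and the route you chose is the standard one in formation theory: exhibit the class $\mathcal{F}$ of $\varphi$-dispersive groups as a local formation $LF(f)$ with $f(p_i)$ the formation of all $\{p_j : j>i\}$-groups, then invoke the Gasch\"utz--Lubeseder--Schmid direction ``local $\Rightarrow$ saturated.'' The supporting facts you lean on are all correct and citable: each $f(p)$ is a formation, $O_p$ of a group acting faithfully and irreducibly on a $p$-chief factor is trivial, $F(G)=\bigcap_{H/K}C_G(H/K)$ for soluble $G$, $LF(f)$ is quotient-closed, and the recursive characterization of $\varphi$-dispersiveness lets the induction in $LF(f)\subseteq\mathcal{F}$ close properly (after your correct elimination of nonabelian chief factors).

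There is one misstep, though it is repairable in a line. In the inclusion $\mathcal{F}\subseteq LF(f)$ you assert $U\le K$, on the grounds that ``the $p$-layer sits above $U$.'' This is false in general: a chief series need not align with the Sylow tower. Take $G=C_2\times C_3$ with $\varphi$-ordering $2,3$; then $U=C_2$, while $H/K=C_3/1$ is a $3$-chief factor with $K=1$, so $U\not\le K$. What your argument actually needs is only $U\le C_G(H/K)$, and that is true: $UK/K$ and $H/K$ are normal subgroups of $G/K$ of coprime orders, hence intersect trivially and therefore commute, giving $[U,H]\le K$. This repaired inclusion is also what makes $RC_G(H/K)/C_G(H/K)$ a $p$-group (it is a quotient of $R/U$ only because $U\le C_G(H/K)$), after which your $O_p$-triviality argument forces $R\le C_G(H/K)$ as claimed. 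With that substitution the whole proof is sound.
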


\begin{Lemma}\label{cyclic}\textup{(See \cite[Theorem 9]{M})}
Let $G$ be a soluble group and each $n$-maximal subgroup of $G$ be subnormal.
If $|\pi(G)|\geq n-1$,
then each Sylow subgroup of $G$ is either normal or of one of the following types:

$(i)$ Cyclic.

$(ii)$ A direct product of a cyclic group and a group of prime order.

$(iii)$ The group $\langle a,b|a^{p^{m-1}}=b^p=1, b^{-1}ab=a^{1+p^{m-2}}\rangle$,
$p$ a prime.

$(iv)$ The quaternion group.
\end{Lemma}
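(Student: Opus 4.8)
The plan is to fix a Sylow $p$-subgroup $P$ of $G$ that is \emph{not} normal (the normal case being one of the allowed alternatives) and to pin down the isomorphism type of $P$ by controlling exactly which subgroups of $P$ are forced to be subnormal in $G$. Write $k=|\pi(G)|$, so $k\geq n-1$ by hypothesis. First I would bound the length of maximal chains running from $P$ up to $G$. Since $P$ is a full Sylow $p$-subgroup and is proper (it is not normal, so in particular $P\neq G$), the index $|G:P|$ is coprime to $p$ and hence is divisible by exactly the $k-1$ primes of $\pi(G)$ other than $p$. Because $G$ is soluble, every maximal subgroup of every subgroup of $G$ has prime-power index, so in any maximal chain $P=M_0<M_1<\cdots<M_R=G$ each index $|M_{t+1}:M_t|$ is a power of a single prime; as the product of these indices is $|G:P|$ and must involve all $k-1$ primes dividing it, we get $R\geq k-1\geq n-2$. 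Now take any $U\leq P$ with $|P:U|=p^{2}$: inside the $p$-group $P$ there is a maximal chain $U<U'<P$ of length $2$, and concatenating it with a maximal chain from $P$ to $G$ yields a maximal chain from $U$ to $G$ of length $2+R\geq n$. Thus $U$ is an $m$-maximal subgroup of $G$ for some $m\geq n$, and by repeated application of Lemma~\ref{maximal}(2) every such subgroup is subnormal in $G$.

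Next I would convert subnormality into containment in $O_{p}(G)$. A subnormal $p$-subgroup of $G$ is contained in $O_{p}(G)$, so the previous step shows that every subgroup of $P$ of index $p^{2}$ lies in $O_{p}(G)$; consequently the subgroup $J:=\langle U : U\leq P,\ |P:U|=p^{2}\rangle$ satisfies $J\leq O_{p}(G)$. On the other hand $O_{p}(G)\leq P$ always, and for a Sylow subgroup one has $P$ normal in $G$ if and only if $P=O_{p}(G)$; since $P$ is not normal, $O_{p}(G)$ is a \emph{proper} subgroup of $P$. Therefore $J\neq P$: the subgroups of $P$ of index $p^{2}$ generate a proper subgroup of $P$.

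It then remains to establish the purely $p$-group-theoretic fact that this generation condition forces $P$ to be of type (i)--(iv). Here I would argue as follows. If $J\neq P$ then $J$ lies in some maximal subgroup $M$ of $P$, so every index-$p^{2}$ subgroup of $P$ is contained in $M$. For any maximal subgroup $M'\neq M$, each maximal subgroup of $M'$ has index $p^{2}$ in $P$ and hence lies in $M\cap M'$; but $M\cap M'$ is itself the unique maximal subgroup of $M'$, so $M'$ has a unique maximal subgroup and is therefore cyclic. Thus at most one maximal subgroup of $P$ (namely $M$) can be non-cyclic. If $P$ is non-cyclic it has at least two maximal subgroups different from $M$, each cyclic, so $P$ possesses a cyclic subgroup of index $p$. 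Invoking the classical classification of $p$-groups with a cyclic maximal subgroup and discarding those with two or more non-cyclic maximal subgroups leaves precisely the cyclic groups, the groups $C_{p^{a}}\times C_{p}$, the modular groups of type (iii), and the quaternion group $Q_{8}$, i.e. types (i)--(iv).

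The main obstacle I expect is this last $p$-group lemma rather than the chain-length bookkeeping: once one recalls that maximal subgroups of soluble groups have prime-power index the first two paragraphs are routine, but the final step requires the structure theorem for $p$-groups with a cyclic subgroup of index $p$ together with a delicate pruning at $p=2$. Concretely, one must verify that $D_{2^{m}}$, $SD_{2^{m}}$, and the generalized quaternion groups $Q_{2^{m}}$ with $m\geq 4$ each have at least two non-cyclic maximal subgroups (so are excluded), whereas $Q_{8}$ has none; and one must check that $C_{p^{a}}\times C_{p}$ and the modular groups of type (iii) have exactly one non-cyclic maximal subgroup, so that they survive. Getting this case analysis exactly right is where the care is needed.
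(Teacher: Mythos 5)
The paper offers no proof of this lemma at all: it is quoted verbatim from Mann's paper (\cite[Theorem 9]{M}), so there is no internal argument to compare yours against. Judged on its own, your proof is correct and self-contained, and it is in effect a reconstruction of Mann's original line of reasoning. The three ingredients all check out: (a) since maximal subgroups of soluble groups have prime-power index, every maximal chain from a non-normal Sylow $p$-subgroup $P$ up to $G$ has length at least $|\pi(G)|-1\geq n-2$, so every subgroup of index $p^{2}$ in $P$ is $m$-maximal in $G$ for some $m\geq n$ and hence subnormal by iterating Lemma~\ref{maximal}(2); (b) a subnormal $p$-subgroup lies in $O_{p}(G)$, which is proper in $P$ because $P$ is not normal, so the index-$p^{2}$ subgroups of $P$ generate a proper subgroup; (c) your counting argument with two distinct maximal subgroups $M,M'$ of $P$ correctly shows that at most one maximal subgroup of $P$ is non-cyclic, and the Burnside classification of $p$-groups with a cyclic maximal subgroup, pruned by that criterion, leaves exactly the types (i)--(iv). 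Two small points should be made explicit. First, Lemma~\ref{maximal}(2) is stated for $\sigma$-subnormality; you must apply it with the minimal partition $\sigma=\{\{2\},\{3\},\dots\}$ and invoke the equivalence of $\sigma$-subnormality and subnormality in that case (Lemma~\ref{subn} gives one direction; the other is trivial). Second, in the final pruning the restriction $m\geq 4$ must not be attached to the dihedral family: $D_{8}$ also needs to be handled, and indeed it has two non-cyclic (Klein four) maximal subgroups, so it is excluded by the same test; alternatively one can note that the presentation in (iii) with $p=2$, $m=3$ is literally $D_{8}$, so either way the statement is safe.
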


\begin{Lemma}\label{order}\textup{(See \cite[Satz 14]{H})}
If $r(G)=2$,
then a Sylow subgroup corresponding the maximal prime divisor of the order of the group is invariant (=normal)
under the condition that this prime divisor is greater than 3.
In particular,
if $2\nmid |G|$ or $3\nmid |G|$, then $G$ satisfies Sylow tower property (see \cite[p. 5]{Ke}).
\end{Lemma}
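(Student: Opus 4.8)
The plan is to prove the first assertion by induction on $|G|$ and then read off the Sylow tower property. So let $G$ be soluble with $r(G)=2$ (rank being defined only for soluble groups), let $p$ be the largest prime dividing $|G|$, and assume $p>3$; I want to show that a Sylow $p$-subgroup $P$ of $G$ is normal. The engine of the whole argument is a pair of degree-$\leq 2$ linear-algebra facts. First, for any prime $q<p$ we have $p\nmid |\mathrm{GL}(2,q)|=q(q-1)^{2}(q+1)$: indeed $q-1<p$, and since $p>3$ is odd, $p-1$ is even and hence composite, so $q\ne p-1$, giving $q+1\leq p-1<p$. Second, an irreducible soluble subgroup $S\leq \mathrm{GL}(2,p)$ has order prime to $p$: irreducibility forces $O_{p}(S)=1$ (a nontrivial normal $p$-subgroup is unipotent and fixes a line), and then $C_{S}(F(S))\leq F(S)$ together with Clifford's theorem puts $S$ either into a monomial subgroup (order dividing $2(p-1)^{2}$) or into the normalizer of a Singer cycle (order dividing $2(p^{2}-1)$), both coprime to $p$; equivalently, a nontrivial unipotent element lies in a Borel, and two unipotents with distinct axes generate a non-soluble group for $p\geq 5$. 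The crucial point is that $r(G)=2$ guarantees we never leave degree $\leq 2$, so the large prime $p$ can never act nontrivially.

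First I would reduce to the case $\Phi(G)=1$. If $P\leq\Phi(G)$ then $P$ is already normal in $G$, since $\Phi(G)$ is nilpotent and normal; otherwise $p$ is still the largest prime dividing $|G/\Phi(G)|$, and as $r(G/\Phi(G))\leq r(G)=2$, induction makes $G/\Phi(G)$ $p$-closed, and the fact that the class of $p$-closed groups is a saturated formation lifts this to $G$. Hence assume $\Phi(G)=1$. Then, $G$ being soluble, the Fitting subgroup $F=F(G)$ is a direct product $F=\prod_{i}V_{i}$ of minimal normal subgroups $V_{i}$ of $G$, each an elementary abelian $q_{i}$-group with $C_{G}(F)=F$. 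Because $r(G)=2$, every $V_{i}$ is an irreducible $G$-module of $\mathbb{F}_{q_{i}}$-dimension $1$ or $2$, and—this is what kills the usual ``permutation of isomorphic summands'' difficulty—each $V_{i}$ is itself normal in $G$, so $G$ acts on each $V_{i}$ individually through $G/C_{G}(V_{i})\hookrightarrow\mathrm{GL}(\dim V_{i},q_{i})$.

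The hard part, where the two linear facts are spent, is to show $p\nmid |G/C_{G}(V_{i})|$ for every $i$. If $q_{i}\ne p$, this is immediate from $p\nmid|\mathrm{GL}(2,q_{i})|$. If $q_{i}=p$ and $\dim V_{i}=1$, then $G/C_{G}(V_{i})\hookrightarrow\mathrm{GL}(1,p)$ has order dividing $p-1$. If $q_{i}=p$ and $\dim V_{i}=2$, then $G/C_{G}(V_{i})$ is an irreducible soluble subgroup of $\mathrm{GL}(2,p)$, hence of order prime to $p$ by the second fact. Now $C_{G}(F)=\bigcap_{i}C_{G}(V_{i})=F$, so $G/F$ embeds into $\prod_{i}G/C_{G}(V_{i})$, a group of order prime to $p$; therefore $p\nmid|G/F|$, and a Sylow $p$-subgroup of $G$ lies in $F$, so it equals $O_{p}(F)=O_{p}(G)$ and is normal. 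This proves $P\trianglelefteq G$. Finally, for the ``in particular'' I would peel off the top prime repeatedly: the hypothesis $2\nmid|G|$ or $3\nmid|G|$ forces the largest prime divisor to exceed $3$ at every stage (an odd group, or a group of order prime to $3$, whose largest prime divisor is at most $3$ is a prime-power group), so the main assertion yields a normal Sylow subgroup for the largest prime; passing to the quotient, which inherits both the hypothesis and $r(\cdot)\leq 2$, and inducting on $|G|$ produces a full Sylow tower.
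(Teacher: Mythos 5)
The paper offers no proof of this lemma to compare against: it is quoted as Huppert's Satz 14 (with the Sylow-tower consequence attributed to Weinstein's book), so your proposal is a genuine reconstruction rather than a rival to an internal argument. Its architecture is sound. The reduction to $\Phi(G)=1$ works (indeed more simply than via saturation: $P\Phi(G)\trianglelefteq G$ and the Frattini argument give $G=\Phi(G)N_G(P)=N_G(P)$); Gaschütz's theorem gives $F(G)=\mathrm{Soc}(G)=\prod_i V_i$ with $C_G(F(G))=F(G)$; the hypothesis $r(G)\leq 2$ makes each $V_i$ an irreducible module of dimension at most $2$ for $G/C_G(V_i)$; and your two arithmetic facts then yield $p\nmid |G/F(G)|$, hence $P=O_p(G)\trianglelefteq G$. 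The peeling-off induction for the ``in particular'' statement is also fine, since a group of odd order (or of order prime to $3$) whose largest prime divisor is at most $3$ is a prime power group.

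However, one intermediate claim is false as stated: an irreducible soluble subgroup $S$ of $\mathrm{GL}(2,p)$ need \emph{not} lie in a monomial subgroup or in the normalizer of a Singer cycle. The primitive soluble subgroups whose image in $\mathrm{PGL}(2,p)$ is $A_4$ or $S_4$ — for instance the preimage of $A_4$, which contains $\mathrm{SL}(2,3)$ and exists for every prime $p\geq 5$ — are of neither kind: both of your alleged overgroups have abelian subgroups of index $2$, while $\mathrm{SL}(2,3)$ has none. The conclusion you need (that $|S|$ is prime to $p$) is still true for these exceptional subgroups, and it is exactly your parenthetical ``equivalently'' argument that proves it in all cases: if $p$ divided $|S|$, pick a unipotent $u\in S$; any $S$-conjugate of $u$ is unipotent, and two unipotents with distinct fixed lines generate $\mathrm{SL}(2,p)$, which is non-soluble for $p\geq 5$; hence all $S$-conjugates of $u$ share the fixed line of $u$, which is therefore $S$-invariant, contradicting irreducibility. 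So keep the unipotent argument and delete the Clifford/monomial dichotomy. A last cosmetic repair: run the induction under the hypothesis $r(G)\leq 2$ rather than $r(G)=2$, since the rank of $G/\Phi(G)$ or of $G/P$ may drop to $1$; that case is harmless, as supersoluble groups already have a Sylow tower with the largest prime at the bottom.
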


\section{Proof of Theorem 1.3 and Corollary 1.4}

{\bf Proof of Theorem 1.3.}
Suppose that this theorem is false and let $G$ be a counterexample with $|G|+|\sigma(G)|$ minimal.
Since $G$ is $\sigma$-soluble,
by \cite[Theorem A]{AN2} there exists a $\sigma$-basis of $G$,
$\{H_{1}, H_{2}, \cdots, H_{n}\}$ say.
Without loss of generality, we may assume that $H_{i}$ is a $\sigma_{i}$-group
and $p\in \sigma_{1}$ where $p$ is the smallest prime dividing $|G|$.
Clearly, $n>1$.
We now proceed by the following steps.

\vskip 0.15cm
$(1)$ $G$ has no normal Hall $\sigma_{i}$-subgroup for any $i\in\{1,\cdots,n\}$.
\vskip 0.15cm
Assume that $G$ has a normal Hall $\sigma_{i}$-subgroup $H_{i}$ of $G$ for some $i\in\{1,\cdots,n\}$.
Then $H_{i}$ has a complement $M$ in $G$ such that $G=H_{i}\rtimes M$ by Schur-Zassenhaus Theorem.
Clearly, $|\sigma(M)|=n-1$ and every $n$-maximal subgroup of $M$ is at least $(n+1)$-maximal of $G$.
By Lemma \ref{maximal}(2) and the hypothesis,
every $n$-maximal subgroup of $M$ is $\sigma$-subnormal in $G$,
so it is $\sigma$-subnormal in $M$ by Lemma \ref{subnormal}(1).
This shows that $M$ satisfies the hypothesis.
The choice of $G$ and Lemma \ref{soluble}(1) imply that $G/H_{i}\simeq M$ is $\sigma$-dispersive.
It follows that $G$ is $\sigma$-dispersive,
a contradiction.
Hence (1) holds.

\vskip 0.15cm
$(2)$ $H_{1}$ is $(n-1)$-maximal in $G$ for every maximal chain containing $H_{1}$.
\vskip 0.15cm
By Lemma \ref{soluble},
$H_{1}$ is at least $(n-1)$-maximal in $G$.
But by (1) and Lemma \ref{subnormal}(7),
$H_{1}$ is not $\sigma$-subnormal in $G$.
Hence by Lemma \ref{maximal}(2) and the hypothesis,
$H_{1}$ is $k$-maximal in $G$ where $k=n-1$ or $k=n$.
If $H_{1}$ is $n$-maximal in $G$,
then every maximal subgroup of $H_{1}$ is $(n+1)$-maximal in $G$ and so it is $\sigma$-subnormal in $G$.
But as $H_{1}$ is not $\sigma$-subnormal,
by Lemma \ref{subnormal}(3),
$H_{1}$ has only one maximal subgroup.
Hence $H_{1}$ is a cyclic subgroup of prime power order,
which means that $H_{1}=G_{p}$ is a cyclic Sylow $p$-subgroup of $G$ and $\sigma_{1}\cap \pi(G)=\{p\}$.
Hence $G$ is $p$-nilpotent by \cite[IV, Theorem 2.8]{HU},
which implies that $G$ has a normal Hall $\sigma_{1}'$-subgroup $E$.
Consequently $|\sigma(E)|=n-1$ and every $n$-maximal subgroup of $E$ is at least $(n+1)$-maximal in $G$.
By Lemma \ref{maximal}(2) and the hypothesis,
every $n$-maximal subgroup of $E$ is $\sigma$-subnormal in $G$,
and so it is $\sigma$-subnormal in $E$ by Lemma \ref{subnormal}(1).
This shows that $E$ satisfies the hypothesis.
The choice of $G$ implies that $E$ is $\sigma$-dispersive.
So $E$ has a normal series $1=E_{1}<E_{2}<\cdots <E_{n-1}<E_{n}=E$
and a complete Hall $\sigma$-set $\mathcal {K}=\{K_{2},K_{3} \cdots, K_{n}\}$
such that $E_{i+1}=E_{i}K_{i+1}$ for all $i=1,2,\cdots, n-1$.
Since $E$ is a Hall $\sigma_{1}'$-subgroup of $G$,
we have that $K_{i}$ is a Hall $\sigma_{i}$-subgroup of $G$ for all $i=2,\cdots, n$.
But as $E_{i+1}=E_{i}K_{i+1}$,
we see that $E_{i+1}$ is also a Hall subgroup of $E$.
Hence $E_{i+1}$ is characteristic in $E$,
and so $E_{i+1}$ is normal in $G$ for all $i=1,2,\cdots, n-1$.
Consequently $G$ has a normal series
\begin{center}
$1=E_{1}<E_{2}<\cdots<E_{n-1}<E_{n}=E<E_{n+1}=EH_{1}=G$
\end{center}
and a complete Hall $\sigma$-set $\{K_{2},K_{3} \cdots, K_{n}, H_{1}\}$ such that $E_{i+1}=E_{i}K_{i+1}$ for all $i=1,2,\cdots, n-1$ and $E_{n+1}=E_{n}H_{1}$.
This means that $G$ is $\sigma$-dispersive,
a contradiction.
Hence $H_{1}$ is $(n-1)$-maximal in $G$ for every maximal chain containing $H_{1}$.

\vskip 0.15cm
$(3)$ $H_{i}$ is an $n$-maximal subgroup of $G$ and $H_{i}$ is a cyclic group of prime order,
for $i=2,\cdots,n$.
\vskip 0.15cm
Since $\{H_{1}, H_{2}, \cdots, H_{n}\}$ is a $\sigma$-basis of $G$,
$H_{i}H_{j}=H_{j}H_{i}$ for all $i,j$.
But as $H_{1}$ is $(n-1)$-maximal in $G$ for every maximal chain containing $H_{1}$ by (2),
we see that $H_{1}$ is a maximal subgroup of $H_{1}H_{i}$.
This shows that $H_{1}$ forms an irreducible pair with $H_{i}$,
where $i=2,\cdots,n$.
Hence $H_{i}$ ($i=2,\cdots,n$) is an elementary abelian Sylow subgroup of $G$ by Lemma \ref{irre}.
By the same discussion as (2),
$H_{i}$ is at least $k$-maximal in $G$,
where $k=n-1$ or $k=n$.
Assume that,
for some $i>1$,
$H_{i}$ is a $(n-1)$-maximal subgroup of $G$ for every maximal chain containing $H_{i}$.
Then with a similar argument as above,
we have that $H_{i}$ forms an irreducible pair with $H_{1}$.
So $H_{1}$ is an elementary abelian Sylow subgroup of $G$ by Lemma \ref{irre},
and so $|\pi(G)|=|\sigma(G)|$.
Without loss of generality,
we may assume that $H_{1},\cdots, H_{r}$ is $(n-1)$-maximal in $G$ for every maximal chain containing $H_{i}$,
where $i=1,...,r$,
and $H_{r+1},\cdots, H_{n}$ is $n$-maximal in $G$,
where $r>1$.
Then for every $j\in\{r+1,\cdots n\}$,
every maximal subgroup of $H_{j}$ is an $(n+1)$-maximal subgroup of $G$,
so it is $\sigma$-subnormal in $G$ by the hypothesis.
But by (1) and Lemma \ref{subnormal}(7),
$H_{j}$ is not $\sigma$-subnormal in $G$.
It follows from Lemma \ref{subnormal}(3) that $H_{j}$ has only one maximal subgroup,
which implies that $H_{j}$ is a cyclic subgroup of prime power order.
But as above,
we know that $H_{j}$ is an elementary abelian Sylow subgroup of $G$,
so $H_{j}$ is a cyclic subgroup of prime order.
Since $G$ is $\sigma$-soluble and $|\pi(G)|=|\sigma(G)|$,
it is easy to see that $G$ is soluble.
Let $R$ be a minimal normal subgroup of $G$.
Then $R$ is an elementary abelian $q$-group for some prime $q$.
If $q\in \sigma_{j}$,
for some $j\in \{r+1,\cdots, n\}$,
then as $H_{j}$ is a cyclic subgroup of prime order,
we have that $H_{j}=R$ is normal in $G$,
which contradicts (1).
Hence $R\leq H_{i}$ for $i\leq r$.
Assume that $R\leq H_{1}$.
Since $H_{2}$ is $(n-1)$-maximal in $G$ for every maximal chain containing $H_{2}$,
with a similar argument as above,
we have that $H_{2}$ forms an irreducible pair with $H_{1}$,
which means that $H_{2}$ is a maximal subgroup of $H_{1}H_{2}$.
But as $H_{2}<RH_{2}\leq H_{1}H_{2}$,
we have that $H_{1}=R$ is normal in $G$,
a contradiction.
Hence for every $i\in\{2,\cdots, n\}$,
we have that $H_{i}$ is an $n$-maximal subgroup of $G$.
By the same discussion as above,
we have that $H_{i}$ is a cyclic subgroup of prime order for $i\in\{2,\cdots, n\}$.
So we have (3).

\vskip 0.15cm
$(4)$ $|\pi(H_{1})|\leq2$.
\vskip 0.15cm
Assume that $|\pi(H_{1})|\geq 3$.
Let $R$ be a minimal normal subgroup of $G$.
Since $G$ is $\sigma$-soluble,
$R$ is a $\sigma_{i}$-group for some $\sigma_{i}\in \sigma(G)$.
But by (1) and (3),
we know that $R$ is a $\sigma_{1}$-group,
so $R\leq H_{1}$.
First suppose that $R$ is an abelian group.
Then $R$ is a $r$-group for some prime $r\in\sigma_{1}$.
Let $Q$ be a Sylow $q$-subgroup of $H_{1}$,
where $q\neq r$ and let $E=H_{2}H_{3}\cdots H_{n}$.
Then by \cite[Theorem A(ii)]{AN2},
there exists some $x\in G$ such that $EQ^{x}=Q^{x}E$.
Since $|\pi(H_{1})|\geq 3$,
we have that $EQ^xR<G$.
Hence $G$ has a subgroup chain
\begin{center}
$H_{2}<H_{2}H_{3}<\cdots<H_{2}\cdots H_{n}=E<EQ^{x}<EQ^{x}R<G$.
\end{center}
This shows that $H_{2}$ is at least $(n+1)$-maximal in $G$,
so $H_{2}$ is $\sigma$-subnormal in $G$ by the hypothesis and Lemma \ref{maximal}(2).
It follows that $H_{2}$ is normal in $G$ by Lemma \ref{subnormal}(7),
which contradicts (1).
Hence $R$ is not an abelian group.
Then for any odd prime $q$ dividing $|R|$,
$R$ is not $q$-nilpotent.
By the Glauberman-Thompson normal $q$-complement Theorem (see \cite[p. 280, Theorem 3.1]{DG}),
we have that $R_{q}<N_{R}(Z(J(R_{q})))<R$,
where $R_{q}$ is a Sylow $q$-subgroup of $R$.
By Frattini argument,
$G=RN_{G}(R_{q})$.
Since $R\leq H_{1}$,
$H_{2}$ normalizes some Sylow $q$-subgroup of $R$,
say $R_{q}$.
Hence $H_{2}\leq N_{G}(Z(J(R_{q})))$.
But then we have the following subgroup chain
 \begin{center}
 $H_{2}<H_{2}R_{q}<H_{2}N_{R}(Z(J(R_{q})))<H_{2}R\leq H_{2}H_{1}<H_{2}H_{1}H_{3}<\cdots<H_{1}\cdots H_{n}=G$,
 \end{center}
which means that $H_{2}$ is at least $(n+1)$-maximal in $G$.
Then with a similar argument as above,
we have that $H_{2}$ is normal in $G$,
which contradicts (1).
Hence $|\pi(H_{1})|\leq2$.
\vskip 0.15cm

$(5)$ $|\pi(H_{1})|=2$.
\vskip 0.15cm
Assume that this is false.
Then by (4),
we have that $|\pi(H_{1})|=1$,
so $|\pi(G)|=|\sigma(G)|=n$ by (3).
Hence $\{H_{1}, H_{2}, \cdots, H_{n}\}=\{P_{1},P_{2},\cdots, P_{n}\}$ is a Sylow basis of $G$,
where $H_{i}=P_{i}$ is a Sylow subgroup of $G$ and $P_{2},\cdots, P_{n}$ is a cyclic subgroup of prime order.
Since $G$ is $\sigma$-soluble and $|\pi(G)|=|\sigma(G)|=n$,
we have that $G$ is soluble.

Let $R$ be a minimal normal subgroup of $G$,
then $R$ is an elementary abelian group and $R\leq P_{i}$ for some $i$.
But $P_{2},\cdots, P_{n}$ is a cyclic subgroup of prime order and not normal in $G$,
so $R$ is an elementary abelian $p$-group and $R\leq P_{1}$.
By (1),
$R<P_{1}$,
so $|\sigma(G/R)|=|\pi(G/R)|=n$
and all $(n+1)$-maximal subgroup of $G/R$ is $\sigma$-subnormal in $G/R$ by Lemma \ref{subnormal}(4) and hypothesis.
This shows that $G/R$ satisfies the hypothesis.
The choice of $G$ implies that $G/R$ is a $\varphi$-dispersive group for some ordering $\varphi$ of the set of all primes.
Then by Lemma \ref{formation},
$R\nleq \Phi(G)$.
Since $G/R$ is a $\varphi$-dispersive group,
$P_{i}R/R\unlhd G/R$ for some $i\geq 2$ by (1).
Without loss of generality,
we can assume that $P_{2}R/R\unlhd G/R$.
Let $H=P_{2}R$.
Then $H\unlhd G$.
Clearly,
$R$ is not cyclic and so $|R|>p$.
Indeed,
if $R$ is cyclic,
then $H=P_{2}R$ is supersoluble.
But since $p$ is the smallest prime dividing $|G|$,
$P_{2}\unlhd G$,
which contradicts (1).
By Lemma \ref{cyclic},
$P_{1}$ has a cyclic maximal subgroup $V$.
If $R\leq V$,
then $R$ is cyclic.
a contradiction.
Hence $R\nleq V$.
So $P_{1}=RV$ and $|P_{1}:V|=p=|R:R\cap V|$.
Since $V$ is cyclic,
$|R\cap V|=p$.
It follows that $|R|=p^2$.

Now let $M/N$ be any chief factor of $G$.
Then $M/N$ is an elementary abelian $q$-group for some prime $q$.
If $q\neq p$,
then $M/N\leq P_{i}N/N\simeq P_{i}/P_{i}\cap N$ for some $i\geq 2$,
so $|M/N|=q$ by (3).
Now assume that $q=p$.
Then $M/N\leq P_{1}N/N$.
If $P_{1}N/N=VN/N$,
then $P_{1}N/N$ is cyclic,
so $|M/N|=p$.
If $VN/N<P_{1}N/N$,
then $VN/N$ is a maximal subgroup of $P_{1}N/N$.
So $M/N\leq VN/N$ or $(M/N)(VN/N)=P_{1}N/N$.
In the former case,
we have that $|M/N|=p$.
In the latter case,
$|P_{1}N/N:VN/N|=p=|M/N:M/N\cap VN/N|$.
Since $VN/N$ is cyclic,
$|M/N\cap VN/N|\leq p$.
Consequently $|M/N|\leq p^2$.
Hence in any case we always have $|M/N|\leq p^2$.
This shows that the rank of $G$ is at most 2.
Since $G$ does not have a normal Sylow subgroup by (1),
$|G|=2^\alpha 3^\beta=2^\alpha 3$ by Lemma \ref{order} and (3).
This shows that $n=2$,
and for every minimal normal subgroup $N$ of $G$,
we have that $N\leq P_1$ and $G/N$ is $\varphi$-dispersive,
where $\varphi$ is the unique ordering of the set of primes $\{2,3\}$.
But $|G|=2^\alpha 3$,
we can let $\varphi$ be the unique ordering of the set of all primes.
Hence by Lemma \ref{formation},
we have that $R$ is the unique minimal normal subgroup of $G$.
Since $n=2$,
we obtain that every $3$-maximal subgroup of $G$ is subnormal in $G$ by the hypothesis and Lemma \ref{subn}.
As $R\nleq \Phi(G)$ and $R$ is the unique minimal normal subgroup of $G$,
there exists a maximal subgroup $M$ of $G$ such that $G=RM=R\rtimes M$,
and clearly $C_{G}(R)=R$.
Then $P_{1}=R(P_{1}\cap M)$ and $1\neq P_{1}\cap M<M$.
If $P_{1}\cap M$ is not maximal in $M$,
then $P_{1}\cap M$ is at least 3-maximal in $G$,
and so is subnormal in $G$.
By \cite [A, 14.3]{Doerk},
$R\leq N_{G}(P_{1}\cap M)$.
Hence $R(P_{1}\cap M)=R\times(P_{1}\cap M)$,
so $P_{1}\cap M\leq C_{G}(R)=R$,
which means that $P_{1}\cap M=1$,
a contradiction.
Hence $P_{1}\cap M$ is a maximal subgroup of $M$.
Let $W$ be a maximal subgroup of $P_{1}\cap M$.
Then $W$ is a 3-maximal subgroup of $G$,
so $W$ is subnormal in $G$.
By the same discussion as above,
we have that $W\leq C_{G}(R)\cap (P_{1}\cap M)=R\cap (P_{1}\cap M)=1$.
This implies that $|P_{1}\cap M|=p=2$.
Hence $|P_{1}|=|R||P_{1}\cap M|=2^3$ and so $|G|=2^33$.
Then $G$ is a group of order 24 possessing an elementary abelian normal subgroup $R$ of order 4 and $C_{G}(R)=R$,
which implies that $G\simeq S_{4}$ by \cite[II, Lemma 8.17]{HU}.
But $S_{4}$ has a 3-maximal subgroup (of order 2) which are not subnormal,
a contradiction.
Hence $|\pi(H_{1})|=2$.

\vskip 0.15cm
$(6)$ Final contradiction.
\vskip 0.15cm
Since $|\pi(H_{1})|=2$ by (5),
$H_{1}$ is soluble by the well known Burnside $p$-$q$ Theorem.
But as $G$ is $\sigma$-soluble,
we have that $G$ is soluble by (3).
By (3) and (5),
$|\pi(G)|=|\sigma(G)|+1=n+1$.
By (1), (3) and Lemma \ref{subnormal}(7),
$H_{2}$ is an $n$-maximal subgroup of $G$ and $H_{2}$ is not $\sigma$-subnormal in $G$.
Hence $m_\sigma(G)=n+1=|\pi(G)|$.
Then $G=D\rtimes M$,
where $D=G^{\mathfrak{N}_{\sigma}}$ is an abelian Hall subgroup of $G$ by \cite[Theorem 1.10]{max}.
If $q||D|$ for some prime $q\in\sigma_{i}$,
where $i\in\{2,3,\cdots,n\}$,
then $D_{q}$ is a normal Sylow $q$-subgroup of $G$,
where $D_{q}$ is a Sylow $q$-subgroup of $D$.
Hence $H_{i}=D_{q}$ by (3),
which means that $H_{i}$ is normal in $G$,
a contradiction.
So $D$ is a $\sigma_{1}$-group,
that is, $D\leq H_{1}$.
But since $G/D$ is $\sigma$-nilpotent,
we have that $H_{1}$ is normal in $G$,
which contradicts (1).
The final contradiction completes the proof of the theorem.

\vskip 0.2cm
{\bf Proof of Corollary 1.4.} Assume that this corollary is false.
Then by Theorem 1.3,
we may assume that $|\sigma(G)|\geq n+1$.
Assume that $|\sigma(G)|=t>n+1$.
Since $G$ is $\sigma$-soluble,
$G$ has a $\sigma$-basis $\{H_{1}, \cdots, H_{t}\}$ by \cite[Theorem A]{AN2}.
Then we have a subgroup chain
\begin{center}
$H_{i}<H_{1}H_{i}<\cdots<H_{1}\cdots H_{i-2}H_{i}<H_{1}\cdots H_{i-1}H_{i}<\cdots<H_{1}\cdots H_{t}=G$,
\end{center}
for $i=1,2,...,n$.
Hence $H_{i}$ is at least $(t-1)$-maximal in $G$,
so $H_{i}$ is at least $(n+1)$-maximal in $G$.
Then by the hypothesis and Lemma \ref{maximal}(2),
$H_{i}$ is $\sigma$-subnormal in $G$,
so $H_{i}$ is normal in $G$ by Lemma \ref{subnormal}(7).
Hence $G$ is $\sigma$-nilpotent and thereby $G$ is $\sigma$-dispersive.
This contradiction shows that $|\sigma(G)|=n+1$.

Now we claim that $|\pi(G)|=|\sigma(G)|=n+1$.
In fact,
if $|\pi(G)|>|\sigma(G)|=n+1$,
then there exists a Hall $\sigma_{i}$-subgroup $H_{i}$ with $|\pi(H_{i})|\geq 2$.
Let $P$ be a Sylow $p$-subgroup of $H_{i}$ and $E=H_{1}\cdots H_{i-1}H_{i+1}\cdots H_{n+1}$.
Then by \cite[Theorem A(ii)]{AN2},
there exists some $x\in G$ such that $EP^x=P^xE$.
Then for any $j\neq i$,
we have the following subgroup chain
\begin{center}
$H_{j}<H_{j}H_{1}<\cdots<E<EP^x<EH_{i}=G$,
\end{center}
which means that $H_{j}$ is at least $(n+1)$-maximal in $G$.
Hence $H_{j}$ is normal in $G$ by the same discussion as above.
For any Sylow subgroup $Q$ of $H_{i}$,
we have $Q<H_{i}<H_{i}H_{1}<\cdots<H_{1}\cdots H_{n+1}=G$,
so $Q$ is at least $(n+1)$-maximal in $G$.
Hence $Q$ is $\sigma$-subnormal in $G$ by Lemma \ref{maximal}(2).
It follows from Lemma \ref{subnormal}(3)(7) that $H_{i}$ is normal in $G$.
Hence $G$ is $\sigma$-nilpotent and so $G$ is $\sigma$-dispersive,
a contradiction.
Therefore $|\pi(G)|=|\sigma(G)|=n+1$.
Hence $G$ is soluble and $\{H_{1}, \cdots, H_{n+1}\}=\{P_{1},\cdots,P_{n+1}\}$ is a Sylow basis of $G$.
Then for every Sylow subgroup $P_{i}$ of $G$,
every maximal subgroup of $P_{i}$ is at least $(n+1)$-maximal in $G$,
so it is subnormal in $G$ by the hypothesis and Lemma \ref{maximal}(2) and Lemma \ref{subn}.
Hence $P_{i}$ is cyclic or $P_{i}\unlhd G$.
If every $P_{i}$ $(i=1,...,n+1)$ is cyclic,
then $G$ is supersoluble,
and so $G$ is $\varphi$-dispersive for some ordering $\varphi$ of all primes.
Consequently,
$G$ is $\sigma$-dispersive,
a contradiction.
Hence there exists $i\in\{1,...,n+1\}$ such that $P_{i}$ is normal in $G$,
say $P_{1}$.
Then $|\sigma(G/P_{1})|=n$ and all $(n+1)$-maximal subgroup of $G/P_{1}$ is $\sigma$-subnormal in $G/P_{1}$ by the hypothesis and Lemma \ref{subnormal}(4).
Hence by Theorem 1.3,
$G/P_{1}$ is $\sigma$-dispersive.
It follows that $G$ is $\sigma$-dispersive.
The final contradiction completes the proof.

\end{document}